\documentclass[a4paper,11pt]{article}

\usepackage{amsthm}
\usepackage{amsfonts,amssymb,amsmath}
\usepackage[dvips]{graphicx}
\usepackage{hyperref}

\newtheorem{theorem}{\bf Theorem}

\newtheorem{lemma}{\bf Lemma}
\newtheorem{remark}{\bf Remark}

\theoremstyle{definition}

\usepackage[margin=1in]{geometry}

\newcommand\be{\begin{eqnarray*}}
\newcommand\ee{\end{eqnarray*}}
\newcommand\beq{\begin{equation}}
\newcommand\eeq{\end{equation}}

\newcommand\ben{\begin{eqnarray}}
\newcommand\een{\end{eqnarray}}

\usepackage{xcolor}
\newcommand*{\missingreference}{\colorbox{red}{?reference?}}
\newcommand*{\missingcitation}{\colorbox{red}{?citation?}}
\makeatletter
\def\@setref#1#2#3{%
  \ifx#1\relax
   \protect\G@refundefinedtrue
   \nfss@text{\reset@font\missingreference}%
   \@latex@warning{Reference `#3' on page \thepage \space
             undefined}%
  \else
   \expandafter#2#1\null
  \fi}
\def\@citex[#1]#2{\leavevmode
  \let\@citea\@empty
  \@cite{\@for\@citeb:=#2\do
    {\@citea\def\@citea{,\penalty\@m\ }%
     \edef\@citeb{\expandafter\@firstofone\@citeb\@empty}%
     \if@filesw\immediate\write\@auxout{\string\citation{\@citeb}}\fi
     \@ifundefined{b@\@citeb}{\hbox{\reset@font\missingcitation}%
       \G@refundefinedtrue
       \@latex@warning
         {Citation `\@citeb' on page \thepage \space undefined}}%
       {\@cite@ofmt{\csname b@\@citeb\endcsname}}}}{#1}}
\makeatother

\makeatletter
\def\blfootnote{\gdef\@thefnmark{}\@footnotetext}
\makeatother

\usepackage[final]{showkeys} 
\usepackage{cleveref}

\usepackage[final]{pdfpages}

\usepackage[centerlast]{caption}
\usepackage{float}
\usepackage{graphicx}
\usepackage{placeins}
\usepackage{tikz}


\date{\today}
\title{Convex sequences may have thin additive bases}

\author{Imre Z. Ruzsa\\
\small Alfr\'{e}d R\'{e}nyi Institute of Mathematics\\[-0.8ex]
\small Hungarian Academy of Sciences\\[-0.8ex] 
\small Budapest, Hungary \\
\small\tt ruzsa.z.imre@renyi.mta.hu\\
\and
Dmitrii Zhelezov\\
\small Alfr\'{e}d R\'{e}nyi Institute of Mathematics\\[-0.8ex]
\small Hungarian Academy of Sciences\\[-0.8ex] 
\small Budapest, Hungary \\
\small \tt dzhelezov@gmail.com
}

\begin{document}

\blfootnote{\textup{2000} \textit{Mathematics Subject Classification}: 11B13}

\maketitle

\begin{abstract}
	For a fixed $c > 0$ we construct an arbitrarily large set $B$ of size $n$ such that its sum set $B+B$ contains a convex sequence of size $cn^2$, answering a question of Hegarty.
\end{abstract}

\section*{Notation}
The following notation is used throughout the paper. The expressions $X \gg Y$, $Y \ll X$, $Y = O(X)$, $X = \Omega(Y)$ all have the same meaning that there is an absolute constant $c$ such that $|Y| \leq c|X|$. 

 If $X$ is a set then $|X|$ denotes its cardinality. 

For sets of numbers $A$ and $B$ the \emph{sumset} $A + B$ is the set of all pairwise sums  
$$
\{ a + b: a \in A, b \in B \}.
$$

\section{Introduction}

Let $A = \{a_i\}, i = 1\ldots n$ be a set\footnote{Sometimes we use the word \emph{sequence} to emphasize the ordering. } of real numbers ordered in a way that $a_1 \leq a_2  \leq \ldots \leq a_n$.  Recall that $A$ is called \emph{convex} if the gaps between consecutive elements of $A$ are strictly increasing, that is 
$$
    a_2 - a_1 < a_3 - a_2 < \ldots < a_n - a_{n-1}.
$$
Studies of convex sets were initiated by Erd\H{o}s who conjectured that any convex set must grow with respect to addition, so that the size of the set of sums $A+A := \{a_1 + a_2 : a_1, a_2 \in A \}$ is significantly larger than the size of $A$.

The first non-trivial bound confirming the conjecture of   Erd\H{o}s was obtained by Hegyv{\'a}ri \cite{MR858397}, and the state of the art bound is due to Schoen and Shkredov \cite{MR2825592}, who proved that for an arbitrary convex set $A$ holds
$$
	|A+A| \geq C|A|^{14/9}\log^{-2/3} |A|
$$
for some absolute constants $C, c > 0$. It is conjectured that in fact 
$$
	|A+A| \geq C(\epsilon)|A|^{2-\epsilon}
$$
holds for any $\epsilon > 0$.

In general, it is believed that convex sets cannot be additively structured. In particular, a few years ago Hegarty asked\footnote{The original MathOverflow question is contrapositive to our reformulation which is technically slightly more convenient to state. } \cite{HegartyQuestion} whether there is a constant $c>0$ with the property 
that there is a set $B$ of arbitrarily large size $n$ such that $B+B$ contains a convex set of size $cn^2$. 

Recall that $B$ is a \emph{basis} (of order two) for a set $A$ if $A \subset B + B$. In other words, Hegarty asked if a convex set of size $n$ can have a thin additive basis (of order two) of size as small as $O(n^{1/2})$, which is clearly the smallest possible size up to a constant.  

Perhaps contrary to the intuition that convex sets lack additive structure, we present a construction which answers Hegarty's question in the affirmative. Our main result is as follows.

\begin{theorem} \label{thm:main}
 There is $c>0$ such that for any $m$ there is a set $B$ of size $n  > m$ such that $B+B$ contains a convex set of size $cn^2$. 
\end{theorem}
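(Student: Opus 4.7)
The approach is constructive: exhibit an explicit set $B$ of size $n$ whose sumset $B+B$ contains a convex sequence of size $\Omega(n^2)$.

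A natural starting attempt is to take a quadratic sequence $B = \{iL + i^2 : 1 \leq i \leq n\}$ for a large integer parameter $L$, so that $b_i + b_j = (i+j)L + (i^2+j^2)$. For each fixed $d \geq 0$, the ``diagonal'' subsequence $\{b_i + b_{i+d} : 1 \leq i \leq n-d\}$ is itself convex, because consecutive differences along the diagonal equal $2L + 4i - 2d + 2$, which strictly increases in $i$. Summed over $d$, these diagonals account for all $\binom{n+1}{2} = \Theta(n^2)$ pair sums, so the raw material for a quadratic convex subsequence is present. The task is to stitch a positive proportion of these diagonals into a single, globally convex subsequence.

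The plan in steps:
\begin{enumerate}
\item Define $B$ precisely. The single-scale choice above organizes $B+B$ into ``clusters'' parametrized by $s = i+j$ at scale $L$, with within-cluster values parametrized by $d = i - j$ contributing at scale $O(n^2)$. A direct analysis shows that with a single scale one can only find convex subsequences of size $O(n)$, because transitions between clusters (of size $\sim L$) are incompatible with within-cluster gaps. To overcome this, I would refine $B$ to a multi-scale structure, for instance $B = B_0 \cup (L_0 \cdot B_1)$ for smaller structured sets $B_0, B_1$ and a scale $L_0$ chosen so that the pair sums split into a nested hierarchy of clusters at compatible scales.
\item Inside $B+B$, choose a subset $A$ by picking pair sums along a carefully designed traversal of the index pairs, combining diagonal structure within each cluster with controlled transitions between clusters. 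The cross sums between the different scale components of $B$ provide the crucial additional flexibility that is absent in the single-scale setup.
\item Verify that the selected sequence $A$ is convex by checking that consecutive differences are strictly increasing. This amounts to a sequence of arithmetic inequalities: within each cluster the gaps grow because of the quadratic term, and across clusters the transition gaps fit strictly between the last within-cluster gap of one cluster and the first within-cluster gap of the next.
\item Count the elements of $A$ and verify $|A| \geq c n^2$ for an absolute constant $c > 0$, by showing that at each scale a constant fraction of the pair sums can be included.
\end{enumerate}

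The main obstacle is step 3. In a single-scale construction, a fixed transition gap at scale $L$ cannot simultaneously exceed the largest within-cluster gap ($\sim n$) and be smaller than the smallest within-cluster gap of the next cluster ($\sim n$); consequently only a bounded number of elements per cluster can be included, yielding an $O(n)$ subsequence at best. The purpose of the multi-scale refinement is precisely to supply transitions at a spectrum of scales, so that the necessary gap-monotonicity inequalities can be satisfied simultaneously. Calibrating the scale parameters to achieve this while still producing $\Omega(n^2)$ selected pair sums is the delicate quantitative heart of the argument, and I expect this to be where the paper's specific construction is most clever.
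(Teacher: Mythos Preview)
Your proposal is a plan rather than a proof: you correctly diagnose the obstruction (stitching the diagonal blocks together convexly), but you do not resolve it. Steps 1--3 remain schematic, and you explicitly defer the crucial point by saying you ``expect this to be where the paper's specific construction is most clever.'' As written there is no construction to verify, so the gap is genuine.

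It is also worth noting that the paper's solution is \emph{not} multi-scale in the sense you suggest. Instead of $B_0 \cup L_0\cdot B_1$, the paper takes $B=\{x_i\}\cup\{y_j\}$ with two \emph{single-scale} quadratic sequences
\[
x_i = i + (\alpha+\gamma)i^2,\qquad y_j = j - \alpha j^2,\qquad \alpha=\tfrac1{n^2},\ \gamma=\tfrac1{1000n^3},
\]
and looks only at the cross-sums $x_i+y_j$. Along the diagonal $i+j=k$ one gets
\[
b_i^{(k)} = (k-\alpha k^2) + 2k\alpha\, i + \gamma i^2,
\]
so each block $B_k=\{b_i^{(k)}:-n\le i\le 2n\}$ is convex, has length roughly $6$ (hence $B_k$ and $B_{k+4}$ overlap), and---crucially---has consecutive gaps $\approx 2k\alpha$ that \emph{increase with $k$}. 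Because the minimum gap in $B_{k+4}$ exceeds the maximum gap in $B_k$ by $\gg 1/n^2$, a pigeonhole argument finds two consecutive elements of $B_k$ nested between two consecutive elements of $B_{k+4}$, and a simple gluing lemma splices the blocks. Iterating over $k\in[0.999n,n]$ with $4\mid k$ yields a convex sequence of size $\Omega(n^2)$.

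The mechanism that makes this work---nearly cancelling quadratic coefficients so that the within-block slope becomes $2k\alpha$ and hence varies with the block index---is exactly the idea missing from your outline. Your single-sequence analysis shows why this cannot happen when the slopes of all diagonals are equal; the fix is not a hierarchy of scales but an asymmetry between the two halves of $B$ that makes the block slopes drift.
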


\section{Construction}

Assume $n$ is fixed and large. We will construct a set $B$ of size $O(n)$ such that $B+B$ contains a convex set of size $\Omega(n^2)$. Theorem \ref{thm:main} will clearly follow.

The following constants (we assume $n$ is fixed) will be used throughout the proof.
$$
	\alpha := \frac{1}{n^2} \,\,\,\,\,
	\gamma := \frac{1}{1000n^3} \,\,\,\,\,
	\epsilon := 0.1
$$

Define 
\be
	x_i &=& i + (\alpha + \gamma)i^2   \\
	y_j &=& j  - \alpha j^2.
\ee
Next, we define
$$
	B_k = \{x_i + y_j : i + j = k \},
$$
where $i$ and $j$ are allowed to be negative. 

Let $k \in [.999n, n]$  so that $\alpha k^2 \in [.99, 1]$. For such an integer $k$ writing $j = k - i$ we have that the $i$th element of $B_k$ is given by
\beq \label{eq:block}
 	b^{(k)}_i = k + (\alpha + \gamma)i^2 - \alpha(k-i)^2  = (k - \alpha k^2) + \gamma i^2 + 2 i k \alpha.
\eeq
Now assume that $i$ ranges in $[-n, 2n]$. The consecutive differences  $b^{(k)}_{i+1} - b^{(k)}_i$ are then given by
$$
	\Delta^{(k)}_i := \gamma(2i+1) + 2k\alpha.
$$ 
Observe that $\Delta^{(k)}_i$ are positive and increasing, thus the block $B_k := \{ b^{(k)}_i \}^{2n}_{-n}$ is convex. Further, by (\ref{eq:block}) for sufficiently large $n$ we have
\ben
	b^{(k)}_{-n} &=& k - \alpha k^2 + \gamma n^2 - 2nk \alpha \in [k - 2.9, k - 3]  \label{eq:lowerbound} \\
	b^{(k)}_{2n} &=& k - \alpha k^2 + \gamma (2n)^2 + 4nk \alpha \in [k+2.9, k + 3.1],  \label{eq:upperbound} 
\een
so $B_k \subset [k-3, k+3] + [-\epsilon, \epsilon]$.

Now we are going to a build large convex sequence out of blocks $B_k$ with $4 | k$. Since each $B_k$ is already convex, it remains to show how to glue together $B_k$ and $B_{k+4}$ so that the resulting set is again convex. We proceed with the following simple lemma.
\begin{lemma} \label{lm:glueing}
	Let $X = \{x_i \}^N_{i=0}$ and $Y = \{ y_j \}^M_{j=0}$ be two convex sequences and there are indices $u$ and $v$ such that 
	$$
	 [x_u, x_{u+1}]	\subset [y_v, y_{v+1}]. 
	$$
	Then 
	$$
		Z := 	\{x_i \}^u_{i=0} \cup \{ y_j \}^M_{j=v+1}
	$$
	is a convex sequence.
\end{lemma}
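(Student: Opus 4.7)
The plan is to verify the two defining properties of a convex sequence for $Z$: strict monotonicity and strictly increasing consecutive gaps. The convexity of $X$ and $Y$ handles all but one gap automatically, so the entire argument reduces to controlling the single \emph{seam gap} $d := y_{v+1} - x_u$ using the interval inclusion.

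First I would check that $Z$ is strictly increasing. Within the two pieces this is immediate from convexity of $X$ and $Y$ (strictly positive gaps), so the only thing to verify is $x_u < y_{v+1}$. Strict convexity of $X$ gives $x_u < x_{u+1}$, and the hypothesis $[x_u, x_{u+1}] \subset [y_v, y_{v+1}]$ yields $x_{u+1} \leq y_{v+1}$; chaining these gives $x_u < y_{v+1}$.

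Next I would verify that consecutive gaps in $Z$ are strictly increasing. Inside the $X$-part the gaps $x_i - x_{i-1}$ with $i \leq u$ are strictly increasing by convexity of $X$, and inside the $Y$-part the gaps $y_j - y_{j-1}$ with $j \geq v+2$ are strictly increasing by convexity of $Y$. It therefore suffices to sandwich the seam gap:
\[
 x_u - x_{u-1} \;<\; d \;<\; y_{v+2} - y_{v+1}.
\]
For the lower bound, strict convexity of $X$ gives $x_u - x_{u-1} < x_{u+1} - x_u$, and $x_{u+1} \leq y_{v+1}$ (from the inclusion) gives $x_{u+1} - x_u \leq d$. For the upper bound, $y_v \leq x_u$ (from the inclusion) yields $d \leq y_{v+1} - y_v$, and strict convexity of $Y$ gives $y_{v+1} - y_v < y_{v+2} - y_{v+1}$.

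There is no real obstacle here; the whole content of the lemma is the observation that containing the interval $[x_u, x_{u+1}]$ inside $[y_v, y_{v+1}]$ is \emph{exactly} the quantitative condition needed so that the seam gap $d$ lies strictly between the last $X$-gap and the first $Y$-gap. The only thing one needs to be mildly careful about is keeping strict inequalities in the right places — the strictness is always supplied by the convexity of $X$ or $Y$, while the inclusion is used only in non-strict form, which is enough.
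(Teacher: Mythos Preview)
Your argument is correct and is essentially identical to the paper's: both proofs reduce everything to sandwiching the single seam gap $y_{v+1}-x_u$ between $x_u-x_{u-1}$ and $y_{v+2}-y_{v+1}$ via the same chain of inequalities. If anything, you are slightly more careful than the paper in separating where strictness comes from convexity versus the (non-strict) interval inclusion.
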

\begin{proof}
	Since  $[x_u, x_{u+1}]	\subset [y_v, y_{v+1}]$ we have that 
	$$
	x_{u} - x_{u-1} < x_{u+1} - x_u < y_{v+1} - x_u.
	$$ 
	On the other hand, 
	$$
	y_{v+1} - x_u < y_{v+1} - y_v < y_{v+2} - y_{v+1}.
	$$
\end{proof}

By Lemma \ref{lm:glueing}, in order to merge $B_k$ and $B_{k+4}$ it suffices to find two consecutive  elements $b^{(k)}_i, b^{(k)}_{i+1} \in B_k$ in between two consecutive elements $b^{(k+4)}_j, b^{(k+4)}_{j+1} \in B_{k+4}$. 
Define
\be
	\delta &:=& \max_{i \in [-n, 2n]}   \Delta^{(k)}_i   \\
	\Delta &:=& \min_{i \in [-n, 2n]}   \Delta^{(k+4)}_i 
\ee
We have 
\ben 
	\delta < 3n\gamma + 2k\alpha < \frac{2.1}{n}   \label{eq:delta1} \\
	\Delta - \delta > 8\alpha - 10n\gamma > \frac{6}{n^2}  \label{eq:delta2}.  
\een

Let $b^{(k)}_v$ be the least element in $B_k$ greater than $b^{(k+4)}_{-n}$ (such element exists by (\ref{eq:upperbound})). We claim that with $m := \lceil n/2 \rceil + 1$ holds
$b^{(k+4)}_{-n + m} > b^{(k)}_{v+m}$, which in turn by the pigeonhole principle guarantees the arrangement of elements required by Lemma \ref{lm:glueing}. 

Indeed, by our choice of $v$ 
\beq \label{eq:startdist}
0 \leq d : =b^{(k)}_v - b^{(k+4)}_{-n} \leq \delta
\eeq
But by (\ref{eq:delta1}), (\ref{eq:delta2})
\beq
	b^{(k+4)}_{-n+m} - b^{(k)}_{v+m} > -d + m(\Delta-\delta) > \frac{3}{n} - \delta > 0,
\eeq
so the claim follows.

It remains to note that 
$$
b^{(k)}_{v+m} < b^{(k+4)}_{-n} + m\Delta < (k+1+\epsilon) +  \frac{2n^2\alpha}{2} + 4\gamma n < k + 2.2
$$ 
and thus $v + m < 2n$ by (\ref{eq:upperbound}).  This verifies that $b^{(k)}_v, b^{(k)}_{v+m} \in B_k$.


\section{Putting everything together}

Applying the procedure described in the previous section, we can glue together consecutive blocks $B_{4l}$ with $4l: = k \in [0.999n, n]$.  Let $A$ be the resulting convex sequence.
First, observe there are $\Omega(n)$ blocks being merged. Moreover, each interval $[4l-1+\epsilon, 4l +1 -\epsilon]$ is covered only by the block $B_{4l}$ and by (\ref{eq:lowerbound}), (\ref{eq:upperbound}), (\ref{eq:delta1}) contains $\Omega(n)$ elements from $B_{4l}$, so $|A| = \Omega(n^2)$. 
On the other hand, by our construction, $A$ is contained in the sumset $B+B$ of $B := \{ x_i\}^{2n}_{-2n} \cup \{ y_j\}^{2n}_{-2n}$ of size $O(n)$.

\begin{remark}
It follows from our construction that there are arbitrarily large convex sets $A$ such that the equation 
$$
a_1 - a_2 = x :\,\,\, a_1, a_2 \in A
$$ 
has $\Omega(|A|^{1/2})$ solutions $(a_1, a_2)$ for at least $\Omega(|A|^{1/2})$ values of $x$.  

\end{remark}

\section{Acknowledgments}

The first author is supported by ERC-AdG. 321104 and Hungarian National Research Development and Innovation Funds  K 109789, NK 104183 and K 119528.

\noindent The second author is supported by the Knuth and Alice Wallenberg postdoctoral fellowship.

\noindent The work on this paper was partially carried out while the second author was visiting R\'enyi Institute of Mathematics by invitation of Endre Szemer\'edi, whose hospitality and support is greatly acknowledged. We also thank Peter Hegarty and Ilya Shkredov for useful discussions.

\bibliographystyle{plain}
\bibliography{convex_set_sumset}

\end{document}